\newtheorem{theorem}{Theorem}
\newtheorem{corollary}[theorem]{Corollary}
\newtheorem{lemma}[theorem]{Lemma}
\title{A note on the inverse problem for the lattice points}
\author{\v Zeljka Ljuji\'c}
\address{Mathematics Ph.D. Program, The CUNY Graduate Center\\
         365 Fifth Avenue, Room 4208 New York, NY 10016-4309}
\email{zljujic@gc.cuny.edu}
\author{Camilo Sanabria}
\address{Mathematics Ph.D. Program, The CUNY Graduate Center\\
         365 Fifth Avenue, Room 4208 New York, NY 10016-4309}
\email{csanabria\_malagon@gc.cuny.edu}
\date{}
\thanks{The second author was partially supported by NSF grants CCF-0901175 and CCF-0964875}
\begin{document}

\begin{abstract}
Let $K\subseteq\mathbb{R}^2$ be a compact set such that $K+\mathbb{Z}^2=\mathbb{R}^2$. We prove, via Algebraic Topology, that the integer points of the difference set of $K$, $(K-K)\cap\mathbb{Z}^2$, is not contained on the coordinate axes, $\mathbb{Z}\times\{0\}\cup\{0\}\times\mathbb{Z}$. This result gives a negative answer to a question posed by P. Hegarty and M. Nathanson on relatively prime lattice points.
\end{abstract}

\maketitle

\section*{Introduction}

We consider the following context. Let $X$ be a metric space which is geodesic and proper, and let $\Gamma$ be a group. Let $\Gamma\times X\rightarrow X$ be a properly discontinuous action by isometries (from the left) such that the quotient $\Gamma\textrm{\textbackslash} X$ is compact. Such action is called \emph{geometric}. The proof of the fundamental observation of geometric group theory implies that in such a context, if $K\subseteq X$ is compact and a fundamental domain for the $\Gamma$-action then the set
\[
\{\gamma\in \Gamma|\ K\cap\gamma K\ne\emptyset\}
\]
is a finite set of generators of $\Gamma$ \cite{5}. This result was proved independently by V.A. Efremovi\v c \cite{6}, J. Milnor \cite{7} and A. S. \v Svarc \cite{8}.  In the case $X=\mathbb{R}^n$ and $\Gamma=\mathbb{Z}^n$ we obtain the following theorem:

\begin{theorem} If $K\subset\mathbb{R}^n$ is a compact set such that for every $x\in\mathbb{R}^n$ there exists $y\in K$ with $x\equiv y$ (mod $\mathbb{Z}^n$) in $\mathbb{R}^n$, then $A=(K-K)\cap\mathbb{Z}^n$ is a finite set of generators for $\mathbb{Z}^n$.
\end{theorem}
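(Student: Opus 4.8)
The plan is to run the Milnor--\v Svarc argument in this concrete translation setting, where the candidate generating set is exactly the set of lattice vectors translating $K$ onto a set that still meets $K$. First I would settle finiteness: $K$ compact implies $K-K$ is the continuous image of the compact set $K\times K$ under $(x,y)\mapsto x-y$, hence compact, hence bounded; a bounded region of $\mathbb{R}^n$ contains only finitely many lattice points, so $A$ is finite.

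The second ingredient is the reformulation
\[
A=(K-K)\cap\mathbb{Z}^n=\{\gamma\in\mathbb{Z}^n\ :\ (K+\gamma)\cap K\ne\emptyset\},
\]
since writing a lattice vector $\gamma$ as $a-b$ with $a,b\in K$ is the same as exhibiting a point $a\in(K+\gamma)\cap K$. Note in particular that $0\in A$ and $A=-A$. Now let $H=\langle A\rangle\le\mathbb{Z}^n$ be the subgroup generated by $A$; the content of the theorem is that $H=\mathbb{Z}^n$. Split $\mathbb{Z}^n$ into $H$ and its complement and put
\[
Y=\bigcup_{\gamma\in H}(K+\gamma),\qquad Y'=\bigcup_{\gamma\in\mathbb{Z}^n\setminus H}(K+\gamma).
\]
The covering hypothesis $K+\mathbb{Z}^n=\mathbb{R}^n$ gives $Y\cup Y'=\mathbb{R}^n$, and I claim $Y\cap Y'=\emptyset$: if some $x$ belonged to both, choose $\gamma\in H$ and $\delta\in\mathbb{Z}^n\setminus H$ with $x-\gamma\in K$ and $x-\delta\in K$; then $\delta-\gamma=(x-\gamma)-(x-\delta)\in(K-K)\cap\mathbb{Z}^n=A\subseteq H$, so $\delta\in H$, a contradiction.

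To finish I would use connectedness of $\mathbb{R}^n$. Because $K$ is bounded, the family $\{K+\gamma\}_{\gamma\in\mathbb{Z}^n}$ is locally finite --- a ball of radius $\tfrac12$ can meet $K+\gamma$ only for the finitely many $\gamma$ within a fixed bounded distance --- so both $Y$ and $Y'$, being locally finite unions of closed sets, are closed. They are disjoint, their union is the connected space $\mathbb{R}^n$, and $Y\supseteq K\ne\emptyset$; hence $Y'=\emptyset$, i.e. $\mathbb{Z}^n\setminus H=\emptyset$, i.e. $H=\mathbb{Z}^n$. Together with finiteness this is exactly the assertion of the theorem.

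I expect the local finiteness/closedness step to be the only place demanding care; everything else is formal bookkeeping, and the one genuinely clever point is the disjointness claim, which is precisely where the definition of $A$ is used. I would also remark that the Algebraic Topology advertised in the abstract plays no role here: it is needed only for the sharper statement in dimension $2$, whereas the present general-$n$ result is pure point-set topology.
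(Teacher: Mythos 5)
Your argument is correct, and it is worth noting that the paper does not actually prove this theorem at all: it obtains it as the special case $X=\mathbb{R}^n$, $\Gamma=\mathbb{Z}^n$ of the fundamental observation of geometric group theory (Efremovi\v c--Milnor--\v Svarc), citing de la Harpe for the proof. What you have written is, in effect, a self-contained proof of the generation half of that observation specialized to the translation action: the identification $A=\{\gamma:(K+\gamma)\cap K\ne\emptyset\}$, the partition of $\mathbb{R}^n$ into $Y=HK$ and $Y'=(\mathbb{Z}^n\setminus H)K$ for $H=\langle A\rangle$, disjointness via the definition of $A$, and closedness of both pieces from local finiteness of the translates, so that connectedness of $\mathbb{R}^n$ forces $Y'=\emptyset$. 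All steps check out (the only implicit hypothesis, $K\ne\emptyset$, follows from the covering assumption, and $Y'=\emptyset$ does give $\mathbb{Z}^n\setminus H=\emptyset$ precisely because each translate of the nonempty $K$ is nonempty). Your closing remark is also accurate: the algebraic topology in the paper is reserved for the two-dimensional inverse problem, not for this direct statement. The only thing your route ``costs'' relative to the citation is length; what it buys is that the reader need not unwind the general Milnor--\v Svarc machinery (geodesic, proper metric space, properly discontinuous cocompact action) to see why a compact fundamental domain forces $(K-K)\cap\mathbb{Z}^n$ to generate.
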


This result leads to the inverse problem, which was originally posed by M.B. Nathanson \cite{1}: If $A$ is a finite set of generators for a group $\Gamma$, such that $A$ is symmetric, i.e. $A^{-1}=A$, and contains the identity of $\Gamma$ does there exist a geometric action of $\Gamma$ on a metric space $X$ such that $A=\{\gamma\in \Gamma|\ K\cap\gamma K\ne\emptyset\}$ for some compact set $K$ which is a fundamental domain for such action? In the case $X=\mathbb{R}^n$ and $\Gamma=\mathbb{Z}^n$, this problem can be translated to a problem of number theory:

\emph{Which sets can be obtained as $(K-K)\cap\mathbb{Z}^n$ where $K$ is a compact set such that  for every $x\in\mathbb{R}^n$ there exists $y\in K$ with $x\equiv y$ (mod $\mathbb{Z}^n$)?}

This type of problems are one of the main topics of additive number theory. For further references see \cite{4}.

M.B. Nathanson in \cite{1} proved that a finite, symmetric set $A\subset \mathbb{Z}$, containing $0$, is a set of generators of $\mathbb{Z}$ if and only if there exist a compact set $K\subset\mathbb{R}$ such that $\mathbb{R}=K+\mathbb{Z}$ and $A=(K-K)\cap\mathbb{Z}$. This answers the inverse problem in the case $n=1$.

As an attempt to attack the case $n=2$, P. Hegarty raised the following question: 
Does there exist a compact set $K\subseteq\mathbb{R}^2$ such that for every $x\in\mathbb{R}^2$ there exists $y\in K$ with $x\equiv y$ (mod $\mathbb{Z}^2$) and $(K-K)\cap\mathbb{Z}^2\subset(\mathbb{Z}\times\{0\})\cup(\{0\}\times\mathbb{Z})$? In this paper we prove that the answer to this question is ``no''. This proves that the set $A=\{(-1,0), (0,-1), (0,0), (1,0), (0,1)\}$, although is a finite, symmetric set of generators of $\mathbb{Z}^2$ containing $0$, is not of the form $(K-K)\cap\mathbb{Z}^2$ for any compact fundamental domain $K$. One can easily see that this negative answer implies that in the case $n>1$ not every finite symmetric subset of generators $\mathbb{Z}^n$ containing $0$ is of the form $(K-K)\cap\mathbb{Z}^n$ for some compact $K\subseteq \mathbb{R}^n$ such that $\mathbb{R}^n=K+\mathbb{Z}^n$. This refines the inverse problem for $n>1$.

By using a different argument the same result was obtained by L.A. Borisov and R. Jin in \cite{2}.

\section*{The proof}

 We will start the proof by using the observation of R. Jin in \cite{2}, that says that instead of considering any compact set $K$, it is enough to consider a set $B=\bigcup_{i=0}^{n-1}\bigcup_{j=0}^{n-1}B_{i,j}+u_{i,j}$, where $B_{i,j}=[\frac{i}{n},\frac{i+1}{n}]\times[\frac{j}{n},\frac{j+1}{n}]$ and $u_{i,j}\in\mathbb{Z}^2$, for some $n$. For the sake of completeness we present the proof:
 
\begin{theorem} Let $K$ be a compact set of $\mathbb{R}^2$. For $J=(j_1,j_2)\in\mathbb{Z}^2$, let
\[B_{n,J}=[\frac{j_1}{n},\frac{j_1+1}{n}]\times[\frac{j_2}{n},\frac{j_2+1}{n}].
\]
\noindent There exists an integer $n_0$ such that for every integer $n\ge n_0$ there is a finite subset $\mathcal{J}$ of $\mathbb{Z}^2$ such that the set 
\[K_n=\bigcup_{J\in\mathcal{J}}B_{n,J}
\]
satisfies $K\subset K_n$ and
\[(K-K)\cap\mathbb{Z}^2=(K_n-K_n)\cap\mathbb{Z}^2.
\]
\end{theorem}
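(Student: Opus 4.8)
The plan is to use the obvious outer approximation by grid squares. For a given $n$, let $\mathcal{J}=\mathcal{J}(n)$ be the set of all $J\in\mathbb{Z}^2$ with $B_{n,J}\cap K\ne\emptyset$, and put $K_n=\bigcup_{J\in\mathcal{J}}B_{n,J}$. Since $K$ is compact it is bounded, say $K\subseteq[-M,M]^2$, and then only grid indices with $-Mn-1\le j_1,j_2\le Mn$ can occur, so $\mathcal{J}$ is finite; also $K\subseteq K_n$ is immediate, and $K_n\subseteq[-M-1,M+1]^2$ for every $n\ge1$. The key elementary observation is that every point of $K_n$ lies in some square $B_{n,J}$ that meets $K$, hence is within distance $\sqrt2/n$ (the diameter of $B_{n,J}$) of a point of $K$. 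The inclusion $(K-K)\cap\mathbb{Z}^2\subseteq(K_n-K_n)\cap\mathbb{Z}^2$ holds trivially because $K\subseteq K_n$, so the whole content of the theorem is the reverse inclusion for $n$ large.

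For the reverse inclusion I would first record two facts: $K-K$ is compact, being the image of $K\times K$ under the continuous map $(x,y)\mapsto x-y$, hence closed; and all the sets $K_n-K_n$ with $n\ge1$ lie inside the fixed bounded box $Q=[-2M-2,2M+2]^2$. Consequently only the finitely many integer points of $Q$ are ever relevant. For each such point $v\in(Q\cap\mathbb{Z}^2)\setminus(K-K)$, closedness of $K-K$ gives $\operatorname{dist}(v,K-K)>0$; let $\delta>0$ be the minimum of these finitely many positive distances (taking $\delta=1$ if there are none). Now choose $n_0$ so that $2\sqrt2/n_0<\delta$.

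Finally, suppose $n\ge n_0$ and $v\in(K_n-K_n)\cap\mathbb{Z}^2$, say $v=a-b$ with $a,b\in K_n$. Then $v\in Q\cap\mathbb{Z}^2$, and by the observation above there are $a',b'\in K$ with $|a-a'|\le\sqrt2/n$ and $|b-b'|\le\sqrt2/n$, so $a'-b'\in K-K$ and $|v-(a'-b')|\le 2\sqrt2/n<\delta$. Hence $v$ cannot lie in $(Q\cap\mathbb{Z}^2)\setminus(K-K)$, i.e.\ $v\in K-K$. This gives $(K_n-K_n)\cap\mathbb{Z}^2\subseteq(K-K)\cap\mathbb{Z}^2$, and combined with the trivial inclusion we obtain the desired equality while $K\subseteq K_n$ holds by construction.

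The argument is soft, and I do not expect a real obstacle beyond bookkeeping; the one point deserving care is the need for a \emph{uniform} separation constant $\delta$ between $K-K$ and the "bad" integer points, which is exactly where boundedness of $K$ (forcing all $K_n$, hence all $K_n-K_n$, into one fixed box, hence finiteness of the relevant integer points) and closedness of $K-K$ enter. Equivalently, one may package the estimates as $d_H(K_n,K)\le\sqrt2/n\to0$ together with continuity of subtraction, but the hands-on version above is enough.
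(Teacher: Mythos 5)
Your proposal is correct and follows essentially the same route as the paper: the same choice of $\mathcal{J}$ (all squares meeting $K$), the same $2\sqrt2/n$ approximation of $K_n-K_n$ by $K-K$, and a uniform separation constant between $K-K$ and the integer points outside it. The only cosmetic difference is that you obtain that constant from the finiteness of the lattice points in a fixed bounding box, while the paper gets it from compactness of $K-K$ and continuity of the distance to $\mathbb{Z}^2\setminus(K-K)$; both work.
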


\begin{proof}
The set $K$ is compact, thus the set $K-K$ is compact. Indeed $K-K=f(K\times K)$, where $f$ is the continuous function $f:\mathbb{R}^2\rightarrow \mathbb{R}$, $(x,y)\mapsto x-y$, and the image of a compact set through a continuous map is a compact set. The compactness of $K-K$ implies that there is an $\varepsilon>0$ such that
\[
\min\{|x-y|: \ x\in K-K,\ y\in \mathbb{Z}^n\setminus(K-K)\}>\varepsilon,
\]
for the distance function $x\in K\mapsto d(x,\mathbb{Z}^2\setminus(K-K))\in\mathbb{R}$ is continuous and has a non-zero minimal value.

Let $n_0>4\sqrt{2}/\varepsilon$, and for $n\ge n_0$ set $\mathcal{J}=\{J\in\mathbb{Z}^2|\ B_{n,J}\cap K\ne\emptyset\}$. It is easy to see that $(K-K)\cap\mathbb{Z}^2\subseteq(K_n-K_n)\cap\mathbb{Z}^2$. To prove the other inclusion we will prove that $(\mathbb{Z}^2\setminus(K-K))\cap (K_n-K_n)=\emptyset$. Indeed, for any $z_1,z_2\in K_n$ there is a $x_1,x_2\in K$ such that $|x_1-z_1|<\sqrt{2}/n$ and $|x_2-z_2|<\sqrt{2}/n$. We have $|(x_1-x_2)-(z_1-z_2)|\le |x_1-z_1|+|x_2-z_2|<2\sqrt{2}/n$. So, for any $z\in K_n-K_n$ there is a $x\in K-K$ such that $|x-z|< 2\sqrt{2}/n$. Hence, for any $y\in \mathbb{Z}^2\setminus(K-K)$ and any $z\in K_n-K_n$,
\[
|y-z|\ge |y-x|-|x-z|>\varepsilon-2\sqrt{2}/n>\varepsilon/2>0.
\]
\end{proof}

We proceed to the proof of our main result.

\begin{theorem}  It does not exist a compact set $K$ s.t. $\mathbb{R}^2=K+\mathbb{Z}^2$ and $(K-K)\cap\mathbb{Z}^2\subseteq(\mathbb{Z}\times\{0\})\cup(\{0\}\times\mathbb{Z})$.
\end{theorem}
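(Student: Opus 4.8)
The plan is to begin from the reduction just established: by the preceding Theorem together with the observation of R.~Jin, if such a $K$ existed we could replace it by a set of the form $B=\bigcup_{0\le i,j\le n-1}\bigl(B_{n,(i,j)}+u_{i,j}\bigr)$ with $n\ge 2$ and $u_{i,j}\in\mathbb{Z}^2$, keeping one integer translate of each small square $B_{n,(i,j)}$; this only shrinks $(K-K)\cap\mathbb{Z}^2$ and preserves $B+\mathbb{Z}^2=\mathbb{R}^2$. Such a $B$ tiles $\mathbb{R}^2$ by its integer translates, so to each fine cell $B_{n,J}$, $J\in\mathbb{Z}^2$, we can attach the unique vector $U_J\in\mathbb{Z}^2$ with $B_{n,J}\subseteq B+U_J$ (equivalently, $B_{n,J}-U_J$ is one of the squares building $B$). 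From the construction, $U$ is quasi-periodic: $U_{J+ne_1}=U_J+e_1$ and $U_{J+ne_2}=U_J+e_2$.

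Next I would reformulate the hypothesis combinatorially. If $B_{n,J}$ and $B_{n,J'}$ are king-adjacent in the fine grid, i.e.\ $\|J-J'\|_\infty\le 1$, then the square $\bigl(B_{n,J}-U_J\bigr)-\bigl(B_{n,J'}-U_{J'}\bigr)$ is contained in $B-B$ and contains the lattice point $U_{J'}-U_J$; hence the assumption $(B-B)\cap\mathbb{Z}^2\subseteq(\mathbb{Z}\times\{0\})\cup(\{0\}\times\mathbb{Z})$ forces $U_{J'}-U_J$ to have a zero coordinate for every king-adjacent pair $J,J'$. Since the four cells surrounding any vertex of the fine grid are pairwise king-adjacent, a short elementary argument about finite subsets of $\mathbb{Z}^2$ that pairwise agree in a coordinate yields the key local dichotomy: \emph{around each fine-grid vertex, either all four values of the first coordinate of $U$ coincide, or all four values of the second coordinate of $U$ coincide.}

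The core is algebraic-topological. Let $g=(g_1,g_2)\colon\mathbb{R}^2\to\mathbb{Z}^2$ be a continuous interpolation of the cell-wise function $U$; quasi-periodicity gives $g_1(x+e_1)=g_1(x)+1$, $g_1(x+e_2)=g_1(x)$, and symmetrically for $g_2$, so each $g_i$ descends to $\bar g_i\colon T^2=\mathbb{R}^2/\mathbb{Z}^2\to\mathbb{R}/\mathbb{Z}=S^1$, and $\bar g=(\bar g_1,\bar g_2)\colon T^2\to S^1\times S^1=T^2$ induces the identity on $H_1$; therefore $\bar g$ has degree $1$ and in particular is surjective. On the other hand, pick regular values $p_1,p_2$; the level set $Z_i=\bar g_i^{-1}(p_i)$ is a $1$-cycle supported near the ``walls'' of $g_i$ (the fine-grid edges across which $g_i$ jumps). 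The local dichotomy says that no fine-grid vertex can be incident both to a wall of $g_1$ and to a wall of $g_2$, for that would force $U$ to be constant on the four cells around it, contradicting the presence of either wall. Hence $Z_1$ and $Z_2$ are disjoint, so $\bar g^{-1}(p_1,p_2)=Z_1\cap Z_2=\emptyset$, contradicting the surjectivity of $\bar g$. (Equivalently: $[Z_1]$ and $[Z_2]$ are, up to sign, the two standard generators of $H_1(T^2)$, hence have intersection number $\pm1$, yet are represented by disjoint cycles.)

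The step I expect to be the main obstacle is the combinatorial reformulation and, inside it, bookkeeping the ``wrap-around'': one has to pass correctly between $B-B$ and the vectors $U_J$ on all of $\mathbb{Z}^2$, and it is precisely the quasi-periodicity of $U$ — the fact that $U$ is not genuinely $n\mathbb{Z}^2$-periodic — that produces the nontrivial winding of $\bar g_1$ and $\bar g_2$, hence the degree-one conclusion, so this must be tracked with care. Verifying the $2\times 2$ dichotomy, making the piecewise-constant $g$ genuinely continuous without disturbing winding numbers, and justifying the homological identifications (over $\mathbb{Z}$ or $\mathbb{Z}/2$) are routine but need to be done cleanly; by comparison the final contradiction — a degree-one self-map of the torus whose two coordinate level sets are disjoint — is immediate.
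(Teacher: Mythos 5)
Your argument is correct, and while it starts from the same reduction and the same local observation as the paper, its global step is genuinely different. The shared part: your ``king-adjacency'' fact (that $U_{J'}-U_J$ lies in $(B-B)\cap\mathbb{Z}^2$, hence on the axes, for cells whose closures meet) and the resulting vertex dichotomy are, in the dual picture, exactly the paper's observation that the edge-labels $\psi$ lie on the axes and that no small square can carry both a red and a blue edge; likewise your map $\bar g$ represents precisely the paper's cocycle $\psi\in H^1(T;\mathbb{Z}\times\mathbb{Z})$, whose associated homomorphism $H_1(T)\to\mathbb{Z}\times\mathbb{Z}$ is an isomorphism. Where you diverge is in extracting the contradiction: the paper decomposes the torus into monochromatic components, shows their boundaries are nullhomotopic white curves, and proves (via lifts to $\mathbb{R}^2$, the Jordan curve theorem, and Van Kampen --- this is the Lemma and Corollary, the most delicate part of the paper) that such a component injects trivially or surjectively on $\pi_1$, so an essential red component would have to realize the vertical gain as well. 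You instead note that $\bar g\colon T^2\to T^2$ induces the identity on $H_1$, hence has degree one and is surjective, while the vertex dichotomy forces its image into $(\{0\}\times S^1)\cup(S^1\times\{0\})$. This trades the paper's point-set work for a standard degree argument and is, in my view, cleaner; the price is that the interpolation must be built carefully, but since the two wall systems $W_1,W_2$ are disjoint compact subsets of $T^2$ (no common edge, by the axis condition, and no common vertex, by the dichotomy), mollifying the cell-wise function within disjoint neighborhoods of $W_1$ and $W_2$ does the job and preserves the quasi-periodicity. Two small points to fix in the write-up: $g$ should take values in $\mathbb{R}^2$, not $\mathbb{Z}^2$ (a continuous $\mathbb{Z}^2$-valued map is locally constant); and you can avoid regular values altogether by choosing $p_1,p_2\neq 0$ and observing that every point of $T^2$ satisfies $\bar g_1=0$ or $\bar g_2=0$, so $\bar g^{-1}(p_1,p_2)=\emptyset$ directly.
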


\begin{proof} Let us assume that such set $K$ exists. In view of the previous theorem there exist $n\in\mathbb{Z}_{>0}$ such that $(K-K)\cap\mathbb{Z}^2=(K_n-K_n)\cap\mathbb{Z}^2$. So, instead of working with $K$ we can work with $K_n$. Furthermore, it is enough to prove the statement for any subset of $K_n$, thus we may reduce $\mathcal{J}$ so that $|\mathcal{J}|=n^2$ and $K_n+\mathbb{Z}^2=\mathbb{R}^2$. We can write $K_n=\bigcup_{i=0}^{n-1}\bigcup_{j=0}^{n-1}B_{i,j}+u_{i,j}$, where $B_{i,j}=[\frac{i}{n},\frac{i+1}{n}]\times[\frac{j}{n},\frac{j+1}{n}]$ and $u_{i,j}\in\mathbb{Z}^2$. Translating $K_n$ by $-u_{0,0}$, we may assume $u_{0,0}=(0,0)$. We have $(K_n-K_n)\cap\mathbb{Z}^2\subseteq(\mathbb{Z}\times\{0\})\cup(\{0\}\times\mathbb{Z})$. 

Let us consider the unit square subdivided into $n^2$ squares $B_{i,j}$, where $0\leqslant i, j\leqslant n-1$. We label the vertices $(\frac{i}{n},\frac{j}{n})$, where $0\leqslant i, j\leqslant n$, with the value $v_{i,j}$ in the following way
\begin{equation*} 
v_{i,j} = \left\{ 
\begin{array}{rl} 
u_{i,j} & \text{for } 0\leqslant i, j\leqslant n-1\\ 
u_{0,j} +(-1,0)& \text{for } i=n,0\leqslant j \leqslant n-1\\ 
u_{i,0} +(0,-1)& \text{for } 0\leqslant i \leqslant n-1, j=n\\ 
(-1,-1)&\text{for } i=n,j=n 
\end{array} \right. 
\end{equation*} 

We direct the edges, the sides of the $B_{i,j}$'s, in upward and rightward direction and we label them with the value of the differences: value at the ending vertex minus value at the initial vertex. Note, that the unit square subdivided in this fashion, and with a prescribed orientation on the edges, can be seen as the torus $T$ with a given $\square$-complex structure. If we denote the labeling of the edges by $\psi$, then $\psi([v_{i,j}, v_{i+1,j}])=v_{i+1,j}-v_{i,j}$, for $0\leqslant i \leqslant n-1,0\leqslant j \leqslant n$ and $\psi([v_{i,j}, v_{i,j+1}])=v_{i,j+1}-v_{i,j}$, for $0\leqslant i \leqslant n,0\leqslant j \leqslant n-1$. Note that $\psi([v_{i,0}, v_{i+1,0}])=\psi([v_{i,n}, v_{i+1,n}])$, for $0\leqslant i \leqslant n-1$ and $\psi([v_{0,j}, v_{0,j+1}])=\psi([v_{n,j}, v_{n,j+1}])$, for $0\leqslant j \leqslant n-1$, so $\psi$ is a well-defined function from the edges of $T$ to the abelian group $\mathbb{Z}\times\mathbb{ Z}$. Moreover, $\psi([v_{i,j}, v_{i+1,j}])+\psi([v_{i+1,j}, v_{i+1,j+1}])-\psi([v_{i,j+1}, v_{i+1,j+1}])-\psi([v_{i,j}, v_{i,j+1}])=0$, for $0\leqslant i, j\leqslant n-1$, so we can see $\psi$ as one representative of an element of the cohomology group $H^1(T;\mathbb{Z}\times\mathbb{ Z})$. There is a natural map $h:H^1(T;\mathbb{Z}\times\mathbb{ Z})\rightarrow\textrm{Hom}(H_1(T),\mathbb{Z}\times\mathbb{ Z})$ that sends $\psi$ to $\overline{\psi_0}:H_1(T)\rightarrow\mathbb{Z}\times\mathbb{ Z}$ where $\overline{\psi_0}([[(0,0),(1,0)]])=(-1,0)$ and $\overline{\psi_0}([[(0,0),(0,1)]])=(0,-1)$. Here, we were using that $H_1(T)=\mathbb{Z}\times\mathbb{Z}$ with basis the homology classes $[[(0,0),(1,0)]]$ and $[[(0,0),(0,1)]]$. Hence, $\overline{\psi_0}$ is an isomorphism. This means that
\[
(*) \left. \begin{array}{l}\textrm{we can read the homotopy type of a closed curve from the sum} \\ \textrm{of the values associated by } \psi \textrm{ to the edges forming the curve.} \end{array} \right.
\]

All the values associated to the edges are lying in the set $K_n-K_n$. Therefore, we can color the edges in the following way:
red if  the value of the edge lays on the $x$-axis and it is different than $0$, blue if the value of the edge lays on the $y$-axis and it is different than $0$, and white if the value of the edge is $0$. Considering any four adjacent squares $B_{i,j}, B_{i+1,j}, B_{i,j+1}, B_{i+1,j+1}$, we can see that any of the squares $B_{i,j}$, for $0\leqslant i, j\leqslant n-2$, cannot have a red and a blue edge in the same time. In the case of the squares $B_{n-1,j}$, where $0\leqslant j\leqslant n-2$, and $B_{i,n-1}$, where $0\leqslant i\leqslant n-2$, the same conclusion arises from considering the squares  $B_{0,j}, B_{0,j+1}, B_{n-1,j}, B_{n-1,j+1}$ and $B_{i,0}, B_{i+1,0}, B_{i,n-1}, B_{i+1,n-1}$. The square $B_{n-1,n-1}$ cannot have red and blue edges at the same time, neither. Indeed, consider the squares $B_{0,0}, B_{0,n-1}, B_{n-1,0}, B_{n-1,n-1}$. We obtained that all the squares can have only red and white edges, blue and white, or all white edges. We will be calling them red, blue and white squares, respectively. Note that, by construction, no square can have only one red or only one blue edge. Also, the common edge between a red and a blue square is white.

We divide the unit square into red, blue and white components. By component, we mean a monochromatic collection of squares, maximal with respect to inclusion, such that when seen on the surface of the torus it is connected. 

Let $C$ be a component and $\sigma=\partial C$ the union of curves enclosing $C$. First, we prove that $\sigma$ is a union of closed curves. The proof is by induction on the number $n$ of squares in $C$. If $n=1$, the component is made of just one square, so $\sigma$ is the simple closed curve enclosing the square. Let $n\ge 2$, and suppose that any component having less than $n$ squares is enclosed by a union of closed curves. Let $C_0$ be a square in $C$ and let $C_1$ be the union of the squares in $C$ different than $C_0$. Then $C=C_0\cup C_1$. Moreover, $C_1$ can be seen as a disjoint union of components, each of them having less than $n$ squares, hence enclosed by a union of closed curves. Thus, $\partial C_1$ is the union of closed curves. There are five possibilities for the intersection $C_0\cap C_1$: it can be a vertex of $C_0$, a side of $C_0$ or a union of two, three or four sides of $C_0$. In each of the cases, we obtain that $\sigma$ is the union of closed curves. As any closed curve can be seen as the union of simple closed curves, we conclude that $\sigma$ is the union of simple closed curves. Note that $\sigma$ is also the topological boundary of $C$. We will refer to it as a boundary of $C$.

Given a curve, we call \emph{gain} the sum of the values associated by $\psi$ to the edges forming it. A gain of value $(\cdot,0)$ can only be obtained through red squares; a gain of value $(0,\cdot)$ can only be obtained through blue squares. Therefore, because the gain of the horizontal curve $[[(0,0),(1,0)]]$ is $(-1,0)$ and the gain of the vertical curve $[[(0,0),(0,1)]]$ is $(0,-1)$, the coloring must contain red, as well as blue component. The boundary of a component is a union of simple closed curves formed by white edges only. Whence, from $(*)$ above, the boundary of a component is a union of closed curves which are contractible on the torus. Indeed $\overline{\psi_0}$ on such white closed curves is zero. Here, by curve being contractible on the torus we mean that its homotopy class is $0$.

Let us consider any red component. If a component is contractible on the torus any horizontal line that crosses the component will have the horizontal gain equal to 0 inside the component. The horizontal gain $(-1,0)$ is obtained only through red squares, so there exists a red component that is non-contractible on the torus. On the other hand, it follows from the following lemma and its corollary, that if a component has a boundary that consists of simple closed curves which are contractible on the torus, then it is either contractible on the torus or it contains loops generating the fundamental group of the torus. Whence, there exist red component inside which we can obtain both gains, $(-1,0)$ and $(0,-1)$. A contradiction.

\end{proof}

\begin{lemma} Let $C$ be a component, and $i:C\rightarrow T$ be the inclusion map. Assume that  the boundary of $C$ is a single simple closed curve. Then
\[
i_*(\Pi_1(C))=\left\{
\begin{array}{l}
0 \\
\mathbb{Z}^2 
\end{array}\right.
\] 
\end{lemma}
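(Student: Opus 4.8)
The plan is to use the hypothesis to recognize $C$ as a compact surface with a single boundary circle, pin it down with the Euler characteristic, and then compute $i_*(\pi_1(C))$ directly.

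First I would check that $C$, viewed as the union of its closed squares inside $T$, is a compact surface with boundary $\partial C=\sigma$. In the $\square$-complex on $T$ each edge lies on exactly two squares, so $C$ can only fail the surface condition at a vertex $v$ at which the squares of $C$ incident to $v$ do not occupy a single connected block of the four squares around $v$; in that event $\sigma$ would pass through $v$ more than once, contradicting that $\sigma$ is an embedded simple closed curve. Setting $D:=\overline{T\setminus\mathrm{int}\,C}$, the same observation applies to $D$ — a pinch configuration for $D$ at a vertex is simultaneously one for $C$ — so $D$ is also a compact surface, and since $C\cup D=T$, $C\cap D=\sigma$, the boundary of $D$ is again $\sigma$. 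Both $C$ and $D$ are connected (for $D$: a second component would be a closed surface, hence a clopen subset of $T$ disjoint from the nonempty $C$, impossible), and both inherit an orientation from $T$; hence $C\cong\Sigma_{g,1}$ and $D\cong\Sigma_{h,1}$ for some $g,h\ge 0$, with $\chi(\Sigma_{k,1})=1-2k$.

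Next, additivity of $\chi$ together with $\chi(\sigma)=\chi(S^1)=0$ gives $0=\chi(T)=\chi(C)+\chi(D)=(1-2g)+(1-2h)$, so $g+h=1$ and $\{g,h\}=\{0,1\}$: one of $C,D$ is a disk and the other is $\Sigma_{1,1}$. If $g=0$, then $C$ is a disk, so $\pi_1(C)=0$ and $i_*(\pi_1(C))=0$. If $g=1$, then $D$ is a disk, so $T$ is obtained from $C$ by attaching the $2$-cell $D$ along $\sigma$; pushing an arbitrary loop of $T$ (made transverse to $\sigma$) off the disk $D$ and onto $\sigma\subseteq C$ shows every loop in $T$ is homotopic into $C$, so $i_*\colon\pi_1(C)\to\pi_1(T)=\mathbb{Z}^2$ is onto and $i_*(\pi_1(C))=\mathbb{Z}^2$. (Equivalently, van Kampen presents $\pi_1(T)$ as $\pi_1(C)/\langle\!\langle[\sigma]\rangle\!\rangle$ with $i_*$ the quotient map, and $[\sigma]$ is the commutator of the two free generators of $\pi_1(\Sigma_{1,1})$, so the quotient is $\mathbb{Z}^2$.) There is no basepoint ambiguity since the image lies in the abelian group $\mathbb{Z}^2$.

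I expect the only real obstacle to be the first step: converting the combinatorial hypothesis ``$\sigma$ is a single simple closed curve'' into the clean topological statement that $C$ and $D$ are honest surfaces with boundary, free of vertex pinch points. After that, the Euler-characteristic count and the two-case conclusion are routine.
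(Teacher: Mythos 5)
Your proof is correct, but it takes a genuinely different route from the paper's. The paper works in the universal cover: it lifts the boundary curve $f$ (using the fact, established earlier from the gain function, that $[f]=0$ in $\pi_1(T)$, so that the lifts are closed curves), applies the Jordan curve theorem to each lift, and then shows by a Lebesgue-measure argument that $p$ restricted to the closed disk $\widetilde{D}$ bounded by one lift is injective; from this it deduces that $C$ equals either $p(\widetilde{D})$ or the closure of its complement, and the two cases give $0$ and $\mathbb{Z}^2$ respectively, the latter by Van Kampen. You replace all of this with the classification of compact surfaces: once the hypothesis that $\partial C$ is a single embedded circle is seen to exclude vertex pinches (your local analysis of the four squares around a vertex is the right check, and it applies to $D=\overline{T\setminus C}$ as well), both $C$ and $D$ are connected orientable compact surfaces with one boundary circle, and $\chi(C)+\chi(D)=\chi(T)=0$ forces one of them to be a disk; Van Kampen then finishes exactly as in the paper's second case. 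Your approach is shorter, avoids the delicate injectivity argument, and does not need the null-homotopy of $\partial C$ as an external input --- that one side is a disk falls out of the Euler-characteristic count. What the paper's heavier machinery buys is reusability: the intermediate facts it establishes (injectivity of $p|_{\widetilde{D}}$, the identification of $C$ with $p(\widetilde{D})$ or with the closure of its complement) are invoked verbatim in the proof of the subsequent corollary, which handles components whose boundary has several simple closed curves; following your route one would have to redo that corollary by other means (e.g., by capping off or an analogous $\chi$ count over all boundary circles).
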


\begin{proof} Let $f:I\rightarrow T$ be the boundary of C, where I is the unit interval $I=[0,1]$. We denote by $p:\mathbb{R}^2\rightarrow T=\mathbb{R}^2/\mathbb{Z}^2$ the universal cover of $T$. The interval $I$ is path-connected and locally path-connceted and $f_*(\pi_1(I))=0\subset0=p_*(\pi_1(\mathbb{R}^2))$, hence for each lift $\widetilde{f(0)}$ of $f(0)$, there is a unique path $\widetilde{f}:I\rightarrow\mathbb{R}^2$ lifting $f$ starting at $\widetilde{f(0)}$. This holds for any map  $g:J\rightarrow T$, where $J$ is an interval in $\mathbb{R}$. As $p$ is a covering map, it is a local homeomorphism of $\mathbb{R}^2$ with $T$, so $f$ being simple implies that the lifts are simple curves as well. Moreover, $[f]=0$, so there is a homotopy $F_t:I \rightarrow T$, $0\leq t\leq1$, such that $F_0=f$ and $F_1=f(0)$. By the homotopy lifting property, for each lift $\widetilde{f}$, there exists a unique homotopy $\widetilde{F_t}:I \rightarrow \mathbb{R}^2$ of $\widetilde{f}$ to $\widetilde{f(0)}$ that lifts $F_t$. Whence, $[\widetilde{f}]=0$ and $\widetilde{f}$ is a loop. Hence, every lift $\widetilde{f}$ of $f$ is a simple closed curve, so by Jordan curve theorem it separates $\mathbb{R}^2$ into two open, path-connected components, of which the image of $\widetilde{f}$ is the common boundary. 

We fix a lift $\widetilde{f(0)}$ and consider the lifting $\widetilde{f}:I\rightarrow\mathbb{R}^2$ of $f$ starting at $\widetilde{f(0)}$. Let us denote by $\widetilde{U}$ the interior region defined by $\widetilde{f}$ and by $\widetilde{D}=\overline{\widetilde{U}}=\widetilde{U}\cup\textrm{Im}(\widetilde{f})$ the closure. We will prove that $p|_{\widetilde{D}}$ is injective. Let us assume the contrary, so there exist $x, y\in \widetilde{D}$ such that $x\ne y$ and $p(x)=p(y)$. Hence, there exists $a\in\mathbb{Z}^2_{\ne(0,0)}$ such that $y=x+a$. On the other hand, $\widetilde{D}$ is a closure of a connected set, so it is connected and since it is locally path-connected, $\widetilde{D}$ is path-connected. Thus there exists a path $\widetilde{\gamma}:I\rightarrow \widetilde{D}$ with $\widetilde{\gamma}(0)=x$ and $\widetilde{\gamma}(1)=\widetilde{f(0)}$. We denote by $\widetilde{\delta}$ the closed curve 
\begin{equation*} 
 \widetilde{\delta}(t)= \left\{ 
\begin{array}{rl} 
\widetilde{\gamma}(t+1) & \text{for } -1\leqslant t\leqslant 0\\ 
\widetilde{f}(t)& \text{for } 0\leqslant t \leqslant 1\\ 
\widetilde{\gamma}(2-t)& \text{for } 1\leqslant t \leqslant 2
\end{array} \right. 
\end{equation*} 

\noindent We have $\widetilde{\delta}:[-1,2]\rightarrow\mathbb{R}^2$ and $\widetilde{\delta}(-1)=\widetilde{\delta}(2)=x$. Then $\delta=p\widetilde{\delta}:[-1,2]\rightarrow T$ is the loop with $\delta(-1)=\delta(2)=p(x)$. By assumption, $x$ and $y$ are two different lifts of $p(x)$, so we can consider the lift $\widetilde{\delta}$ of $\delta$ starting at $x$ and the lift $\widetilde{\delta}'$ of $\delta$ starting at $y$. We consider the closed curve $\widetilde{\delta}+a:I\rightarrow\mathbb{R}^2$. We have $p(\widetilde{\delta}+a)=\delta$ and $(\widetilde{\delta}+a)(-1)=y$, so by unique lifting property $\widetilde{\delta}'=\widetilde{\delta}+a$. Now, every lift of $\delta$ contains a lift of $f$, since $\delta(t)=f(t)$, for $0\leq t\leq 1$. Whence $\widetilde{f}(t)=\widetilde{\delta}(t)$, for $0\leq t\leq 1$ is the lift of $f$ starting at $\widetilde{f}(0)$ and $\widetilde{f}'(t)=\widetilde{\delta}'(t)$, for $0\leq t\leq 1$ is the lift of $f$ starting at $\widetilde{f}'(0)=\widetilde{f}(0)+a$. Furthermore, $\widetilde{f}'=\widetilde{f}+a$. Thus $\textrm{Im}(\widetilde{f})\cap \textrm{Im}(\widetilde{f'})=\emptyset$. For if $\widetilde{f}(t_1)=\widetilde{f'}(t_2)$, for some $0\leq t_1,t_2 \leq 1$, then $\widetilde{f}(t_1)=\widetilde{f}(t_2)+a$, hence $t_1\ne t_2$. Moreover, $f(t_1)=f(t_2)$, and since $f$ is a simple closed curve, we obtain $t_1=0, t_2=1$ or $t_1=1, t_2=0$, a contradiction, for $\widetilde{f}(0)=\widetilde{f}(1)=x\ne y=\widetilde{f}'(0)=\widetilde{f}'(1)$. We obtained that the images of the liftings $\widetilde{f}$ and $\widetilde{f}'=\widetilde{f}+a$ of $f$ are disjoint. We consider the intersection $\widetilde{D}\cap\widetilde{D'}$, where $\widetilde{D'}$ is the closure of the interior region defined by $\widetilde{f'}$. We have $\widetilde{D}'=\widetilde{D}+a$, so $\mu(\widetilde{D}')=\mu(\widetilde{D})$, where by $\mu$ we denote the Lebesgue measure. Since $\textrm{Im}(\widetilde{f})$ and $\textrm{Im}(\widetilde{f'})$ are disjoint, we have $\textrm{Im}(\widetilde{f})\subset\widetilde{U'}$ or $\textrm{Im}(\widetilde{f})\subset(\widetilde{D'})^C$. If $\textrm{Im}(\widetilde{f})\subset\widetilde{U'}$, then $\widetilde{D}\subset \widetilde{U'}\subset \widetilde{D'}$ and $\mu(\widetilde{D'}\setminus \widetilde{D})=\mu(\widetilde{D})-\mu(\widetilde{D'})=0$. Having that  $U'\setminus D\subset D'\setminus D$, we obtain $\mu(U'\setminus D)=0$. This is a contradiction, for $U'\setminus D$ is open in $\mathbb{R}^2$, whence $\mu(U'\setminus D)>0$. We obtain $\textrm{Im}(\widetilde{f})\subset(\widetilde{D'})^C$. Similarly, $\textrm{Im}(\widetilde{f'})\subset(\widetilde{D})^C$. Hence, $\widetilde{D}\cap\widetilde{D'}=\emptyset$. On the other hand, by assumption, $y\in\widetilde{D}$ and $y=x+a\in\widetilde{D}+a=\widetilde{D'}$. This is a contradiction, so $p|_{\widetilde{D}}$ is injective.

Next, we need to prove that if $D=p(\widetilde{D})$, then $C=D$ or $C=\overline{D^C}$. First, we prove that $\textrm{Int}(C)=C\setminus\textrm{Im}(f)$ and $\textrm{Int}(\overline{C^C})=\overline{C^C}\setminus\textrm{Im}(f)=C^C$ are connected sets in $T$. This is to say that $f$ divides $T$ into two connected components: $T\setminus\textrm{Im}(f)=\textrm{Int}(C)\cup \textrm{Int}(\overline{C^C})$. As this components are open in $\mathbb{R}^2$, they are locally path-connected, and thus path-connected. 

We consider $C\setminus\textrm{Im}(f)$. Proof is by induction on number $n$ of squares in $C$. If $n=1$, the component $C$ is a square and the square without its border is connected. Let $n\ge2$, and suppose that the statement is true if $C$ consists of less than $n$ squares. Let $C_0$ be a square in $C$ touching the boundary $\textrm{Im}(f)$ and let $C_1$ be the union of squares in $C$ different than $C_0$. Since $f$ is simple closed curves, the intersection $C_0\cap C_1$ can be one, two or three sides of $C_0$. In all three cases, the boundary $\partial C_1$ will be still closed simple curve. Thus, by induction hypothesis, $C_1\setminus\partial C_1$ is connected. Since the intersection $C_0\cap C_1$ is not contained in $\textrm{Im}(f)$, we obtain that $C\setminus\textrm{Im}(f)=(C_0\cup C_1)\setminus\textrm{Im}(f)$ is connected. A similar argument holds for $C^C$. 

Now, $p|_{\widetilde{D}}$ is injective, so $p(\widetilde{U})\cap p(\textrm{Im}(\widetilde{f}))=p(\widetilde{U})\cap\textrm{Im}(f)=\emptyset$, since $\widetilde{U}=\textrm{Int}(\widetilde{D})$. We have $p(\widetilde{U})\subset T\setminus\textrm{Im}(f)$. On the other hand, the interior $\widetilde{U}$ is connected. The covering map $p$ is continious, whence $p(\widetilde{U})$ is connected. We obtain that $p(\widetilde{U})\subset\textrm{Int}(C)$ or $p(\widetilde{U})\subset \textrm{Int}(\overline{C^C})$, or equivalently $D=p(\widetilde{D})\subset C$ or  $D=p(\widetilde{D})\subset \overline{C^C}$.

Let us prove that $D=C$ or $D=\overline{C^C}$, the latter being equivalent to $C=\overline{D^C}$. Let us assume that $D\subset C$. This means that $p(\widetilde{U})\subset\textrm{Int}(C)$. Fix $x\in p(\widetilde{U})$. There exists $\widetilde{x}\in \widetilde{U}$ such that $x=p(\widetilde{x})$. Let $y\in \textrm{Int}(C)$. Since $\textrm{Int}(C)$ is path-connected, there exist a path $g:I\rightarrow \textrm{Int}(C)\subset T$ such that $g(0)=x$ and $g(1)=y$. Let $\widetilde{g}:I\rightarrow\mathbb{R}^2$ be the unique path lifting $g$ and starting at $\widetilde{x}=\widetilde{g(0)}$. Then $\widetilde{y}=\widetilde{g(1)}$ is a lift of $y$, i.e. $p(\widetilde{y})=y$. Moreover, $\textrm{Im}(\widetilde{g})\cap \textrm{Im}(\widetilde{f})=\emptyset$. For, if $\widetilde{z}\in \textrm{Im}(\widetilde{g})\cap \textrm{Im}(\widetilde{f})$, then $p(\widetilde{z})\in \textrm{Im}(g)\cap \textrm{Im}(f)$, a contradiction, since $\textrm{Im}(g)\subset\textrm{Int}(C)$ and $\textrm{Int}(C)\cap\textrm{Im}(f)=\emptyset$. We obtain that $\textrm{Im}(\widetilde{g})\subset\widetilde{U}$, so $\widetilde{y}\in\widetilde{U}$ and $y\in p(\widetilde{U})$. Whence, $p(\widetilde{U})\subset\textrm{Int}(C)$ and $D=C$. Similarly, we conclude that if $p(\widetilde{U})\subset \textrm{Int}(\overline{C^C})$, then $C=\overline{D^C}$.

We are now in position of proving the statement. Let $h:I\rightarrow T$ be a closed curve in $D$. Then there is a unique lift $\widetilde{h}:I\rightarrow\mathbb{R}^2$ of $h$ such that $\widetilde{h}(0)\in\widetilde{D}$. Moreover, since $p|_{\widetilde{D}}$ is injective and $h(0)=h(1)$, we have $\widetilde{h}(0)=\widetilde{h}(1)$, so $\widetilde{h}$ is a closed curve in $\mathbb{R}^2$, hence $[\widetilde{h}]=0$. Having that $h=p(\widetilde{h})$, we obtain $[h]=[p\circ \widetilde{h}]=p_*(\widetilde{h})=0$. This means that if $j:D\rightarrow T$ denotes the inclusion map, then $j_*(\pi_1(D))=0$. But, we already proved that $C=D$ or $C=\overline{D^C}$. Thus, if $C=D$, then $i_*(\pi_1(C))=0$. On the other hand, if $C=\overline{D^C}$ , then as $j_*(\pi_1(D))=0$, Van Kampen's theorem implies  $i_*(\pi_1(C))=\mathbb{Z}^2$.

\end{proof}

\begin{corollary} Let $C$ be a component. Then
\[
i_*(\pi_1(C))=\left\{
\begin{array}{l}
0 \\
\mathbb{Z}^2 
\end{array}\right.
\] 

\end{corollary}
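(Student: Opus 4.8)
The plan is to reduce the Corollary to the Lemma by induction on the number of simple closed curves that make up the boundary of the component. In fact I will prove the slightly more general statement that $i_*(\pi_1(C))\in\{0,\mathbb{Z}^2\}$ for \emph{any} connected union of squares $C\subseteq T$ whose boundary $\partial C=f_1\cup\cdots\cup f_k$ is a union of simple closed curves each of which is null-homotopic in $T$; for the components occurring in the proof of the main theorem this hypothesis was already verified there, using the coloring argument together with the observation $(*)$. Note that the proof of the Lemma nowhere used that $C$ is monochromatic, only that it is a connected union of squares with a single simple-closed-curve boundary, so the case $k=1$ of the general statement is exactly the Lemma; and when $k=0$ a connected union of squares with empty boundary is all of $T$, whence $i_*(\pi_1(C))=\pi_1(T)=\mathbb{Z}^2$.

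For the inductive step assume $k\ge 2$. Since $f_k$ is a null-homotopic simple closed curve, the argument already carried out in the proof of the Lemma --- lift $f_k$ to $\mathbb{R}^2$, where it becomes a simple closed loop by the path- and homotopy-lifting properties, invoke the Jordan curve theorem, and use the Lebesgue-measure argument to conclude that $p$ restricts injectively to the closed disk bounded by the lift --- shows that $f_k$ bounds an embedded closed disk $D$ in $T$ on exactly one of its two sides, and $D$ is itself a union of squares. The set $\textrm{Int}(C)$ is connected and misses $f_k$, so it lies in one of the two sides of $f_k$, giving two cases.

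If $\textrm{Int}(C)\subseteq\textrm{Int}(D)$ then $C\subseteq D$, and since $D$ is simply connected the inclusion $C\hookrightarrow T$ factors through the contractible set $D$, so $i_*(\pi_1(C))=0$. Otherwise $\textrm{Int}(C)\cap\textrm{Int}(D)=\emptyset$, and I put $C'=C\cup D$. Then $C'$ is again a connected union of squares; its boundary is $f_1\cup\cdots\cup f_{k-1}$, because $f_k$ has become interior to $C'$ while each $f_i$ with $i<k$ is disjoint from $\textrm{Int}(D)$ --- if $f_i$ met the open set $\textrm{Int}(D)$ it would, being connected and disjoint from $f_k=\partial D$, be contained in $\textrm{Int}(D)$, which is impossible since $\textrm{Int}(C)$ meets $f_i$ but avoids $\textrm{Int}(D)$ --- and so each such $f_i$ is still a boundary curve of $C'$. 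By the inductive hypothesis $i'_*(\pi_1(C'))\in\{0,\mathbb{Z}^2\}$, where $i'\colon C'\to T$ is the inclusion. Finally $C'=C\cup_{f_k}D$ is obtained from $C$ by attaching a disk along $f_k$, so Van Kampen's theorem shows that $\pi_1(C)\to\pi_1(C')$ is surjective; since $i$ is the composite of $C\hookrightarrow C'$ with $i'$, we conclude $i_*(\pi_1(C))=i'_*(\pi_1(C'))\in\{0,\mathbb{Z}^2\}$, completing the induction.

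The step I expect to require the most care is the bookkeeping in the second case: checking that attaching $D$ leaves precisely $f_1,\dots,f_{k-1}$ as the boundary of $C'$, in particular handling the degenerate possibility that two of the boundary curves share only a vertex (which is dealt with by a small perturbation, or by noting that it does not affect fundamental groups). Everything of substance --- the covering-space lifting, the Jordan curve theorem, the measure argument forcing $p|_{\widetilde{D}}$ to be injective, and the Van Kampen computation --- is already present in the proof of the Lemma, and the Corollary only iterates it.
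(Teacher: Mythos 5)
Your overall strategy is genuinely different from the paper's: you peel off the boundary curves one at a time by induction, filling in the disk bounded by $f_k$ and using Van Kampen to see that $\pi_1(C)\rightarrow\pi_1(C\cup D)$ is surjective, whereas the paper treats all boundary curves simultaneously (lifting each of them, taking the union $D$ of the closed interior regions and the intersection $E$ of the closed exterior regions, and showing $C\subseteq p(D)$ or $C=p(E)$). The inductive idea is attractive and most of its ingredients are indeed already in the Lemma. However, there is a genuine gap at the pivot of your inductive step: you assert that $\mathrm{Int}(C)$ is connected, and this is false for the components arising in the main theorem. A component is only required to be connected as a union of \emph{closed} squares, so two of its squares may meet only at a vertex; then $\mathrm{Int}(C)$ is disconnected, and the dichotomy ``$\mathrm{Int}(C)\subseteq\mathrm{Int}(D)$ or $\mathrm{Int}(C)\cap\mathrm{Int}(D)=\emptyset$'' breaks down --- one piece of $\mathrm{Int}(C)$ can lie inside the disk bounded by $f_k$ and another piece outside, and neither of your two cases applies. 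This is not the same degeneracy you flag (two \emph{boundary curves} sharing a vertex); it is the component itself that pinches. The paper is forced to deal with exactly this: its proof first assumes $\mathrm{Int}(C)$ connected and only at the very end reduces the general case to that one by writing $C$ as a wedge sum of components with connected interiors and applying Van Kampen (a free product of subgroups each equal to $0$ or $\mathbb{Z}^2$ generates $0$ or $\mathbb{Z}^2$). Your induction can be repaired by restricting the inductive statement to connected unions of squares with \emph{connected interior} (checking that $\mathrm{Int}(C\cup D)$ is again connected, which does hold) and then prepending the same wedge-sum reduction; but as written the proof does not cover the general component.

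A smaller point: in the boundary bookkeeping you argue that $f_i$ ($i<k$) cannot meet $\mathrm{Int}(D)$ ``since $\mathrm{Int}(C)$ meets $f_i$.'' It does not --- $f_i\subseteq\partial C$ is disjoint from $\mathrm{Int}(C)$. What you need, and what is true, is that $f_i\subseteq\overline{\mathrm{Int}(C)}$, so that $f_i\subseteq\mathrm{Int}(D)$ would force the open set $\mathrm{Int}(D)$ to meet $\mathrm{Int}(C)$, contradicting the case hypothesis. This is easily fixed but should be stated correctly.
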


\begin{proof} First, we consider the case when the $\textrm{Int}(C)$ is connected. In this case, as $\textrm{Int}(C)$ is locally path-connected, whence $\textrm{Int}(C)$ is path-connected. We already noticed that the boundary of $C$ is a union of simple closed curves which are contractible on the torus. Let $f:I\rightarrow T$ be a simple closed curve that is a part of the boundary of $C$. Arguing as in the previous lemma, every lift $\widetilde{f}$ of $f$ is a simple closed curve and, by Jordan curve theorem, it separates $\mathbb{R}^2$ into two open, path-connected components, of which image of $\widetilde{f}$ is the common boundary. Let us fix a lift $\widetilde{f}$ of $f$ and let $\widetilde{U}$ be the interior region defined by $\widetilde{f}$. By the previous lemma, $p|_{\widetilde{D}}$ is injective, where $\widetilde{D}=\overline{\widetilde{U}}$. Two possibilities may occur: $p(\widetilde{U})\cap \textrm{Int}(C)\ne\emptyset$ or $p(\widetilde{U})\cap \textrm{Int}(C)=\emptyset$. Let us consider the case $p(\widetilde{U})\cap \textrm{Int}(C)\ne\emptyset$. As $\textrm{Int}(C)$ is path-connected, we obtain $\textrm{Int}(C)\subset p(\widetilde{U})$. Thus, we have $\textrm{Int}(C)\subset p(\widetilde{U})$ or $p(\widetilde{U})\cap \textrm{Int}(C)=\emptyset$, the latter being equivalent to $p(\widetilde{U})\subset C^C$. If $\textrm{Int}(C)\subset p(\widetilde{U})$, then $\textrm{Int}(C)\subset (\bigcup_{a\in\mathbb{Z}^2}p(\widetilde{U}+a))=p(\bigcup_{a\in\mathbb{Z}^2}(\widetilde{U}+a))$, where $\widetilde{U}+a$, when $a$ ranges through $\mathbb{Z}^2$, represents the interior regions of all liftings of $f$. Hence, $p^{-1}(\textrm{Int}(C))\subset\bigcup_{a\in\mathbb{Z}^2}(\widetilde{U}+a)$. On the other hand, if $p(\widetilde{U})\cap \textrm{Int}(C)=\emptyset$, then $p(\bigcup_{a\in\mathbb{Z}^2}(\widetilde{U}+a))\cap \textrm{Int}(C)=\emptyset$, so $p^{-1}(\textrm{Int}(C))\subset(\bigcup_{a\in\mathbb{Z}^2}(\widetilde{U}+a))^C=\bigcap_{a\in\mathbb{Z}^2}(\widetilde{U}+a)^C$. We conclude that $p^{-1}(C)\subset\bigcup_{a\in\mathbb{Z}^2}(\widetilde{D}+a)$ or $p^{-1}(C)\subset\bigcap_{a\in\mathbb{Z}^2}(\widetilde{U}+a)^C$. 

Now, each of the simple closed curves making the boundary of $C$ is lifted to simple closed curves through $p$ and each lift defines an interior and an exterior region. Let $D$ be the union of the closures of the interior regions and $E$ be the intersection of the closures of the exterior regions. By the previous argument, we obtain that $p^{-1}(C)\subset D$ or $p^{-1}(C)\subset E$, and since $p$ is surjective, we have $C\subset p(D)$ or $C\subset p(E)$. 

If $C\subset p(E)$, we actually have the equality $C=p(E)$. Indeed, if $p(E)\setminus C\ne\emptyset$, then exists a square $S$ in $p(E)$ not belonging to $C$ such that $S\cap C\ne\emptyset$. For, if that is not the case, $p(E)=(\bigcup_{S\in p(E)} S)\cup C$ would be a disconnection, which would contradict the fact that $p(E)$ is connected. But this would mean that there exists $x\in S$ such that $p^{-1}(x)\subset \bigcup_{a\in\mathbb{Z}^2}(\widetilde{U}+a)$, where $\widetilde{U}$ is an interior region of a lifting of one of the simple closed curves making the border of $C$. A contradiction, since there exists $y\in p^{-1}(x)$ such that $y\in E$ and $E\cap \bigcup_{a\in\mathbb{Z}^2}(\widetilde{U}+a)=\emptyset$.

On the other hand, by the previous lemma and Van Kampen's theorem, we have $j_*(\pi_1(p(D)))=0$, where $j:p(D)\rightarrow T$ is the inclusion map. Having that $D\cup E=\mathbb{R}^2$, we obtain that $p(D)\cup p(E)=T$ and, by Van Kampen's theorem, $k_*(\pi_1(p(E)))=\mathbb{Z}\times\mathbb{Z}$, where $k:p(E)\rightarrow T$ is the inclusion map. By the previous discussion, $C\subset p(D)$ or $C=p(E)$, which ends the proof in the case when $\textrm{Int}(C)$ is connected.

Finally, let us define the wedge sum as a union of two sets intersecting at only one point. Then any component $C$ can be seen as the wedge sum of components with connected interiors. The statement follows by Van Kampen's theorem.
  
\end{proof}

\end{document}